\begin{document}  

\newcommand{\nc}{\newcommand}
\newcommand{\delete}[1]{}
\nc{\dfootnote}[1]{{}}          
\nc{\ffootnote}[1]{\dfootnote{#1}}
\nc{\mfootnote}[1]{\footnote{#1}} 
\nc{\ofootnote}[1]{\footnote{\tiny Older version: #1}} 

\nc{\mlabel}[1]{\label{#1}}  
\nc{\mcite}[1]{\cite{#1}}  
\nc{\mref}[1]{\ref{#1}}  

\delete{
\nc{\mlabel}[1]{\label{#1}  
{\hfill \hspace{1cm}{\bf{{\ }\hfill(#1)}}}}
\nc{\mcite}[1]{\cite{#1}{{\bf{{\ }(#1)}}}}  
\nc{\mref}[1]{\ref{#1}{{\bf{{\ }(#1)}}}}  
}

\nc{\mbibitem}[1]{\bibitem{#1}} 

\newtheorem{theorem}{Theorem}[section]
\newtheorem{prop}[theorem]{Proposition}
\newtheorem{defn}[theorem]{Definition}
\newtheorem{lemma}[theorem]{Lemma}
\newtheorem{coro}[theorem]{Corollary}
\newtheorem{prop-def}{Proposition-Definition}[section]
\newtheorem{claim}{Claim}[section]
\newtheorem{remark}[theorem]{Remark}
\newtheorem{propprop}{Proposed Proposition}[section]
\newtheorem{conjecture}{Conjecture}
\newtheorem{exam}{Example}[section]
\newtheorem{assumption}{Assumption}
\newtheorem{condition}[theorem]{Assumption}
\newtheorem{question}[theorem]{Question}

\renewcommand{\labelenumi}{{\rm(\alph{enumi})}}
\renewcommand{\theenumi}{\alph{enumi}}

\nc{\tred}[1]{\textcolor{red}{#1}}
\nc{\tblue}[1]{\textcolor{blue}{#1}}
\nc{\tgreen}[1]{\textcolor{green}{#1}}
\nc{\tpurple}[1]{\textcolor{purple}{#1}}
\nc{\btred}[1]{\textcolor{red}{\bf #1}}
\nc{\btblue}[1]{\textcolor{blue}{\bf #1}}
\nc{\btgreen}[1]{\textcolor{green}{\bf #1}}
\nc{\btpurple}[1]{\textcolor{purple}{\bf #1}}

\nc{\adec}{\check{;}}
\nc{\dftimes}{\widetilde{\otimes}} \nc{\dfl}{\succ}
\nc{\dfr}{\prec} \nc{\dfc}{\circ} \nc{\dfb}{\bullet}
\nc{\dft}{\star} \nc{\dfcf}{{\mathbf k}} \nc{\spr}{\cdot}
\nc{\disp}[1]{\displaystyle{#1}}
\nc{\bin}[2]{ (_{\stackrel{\scs{#1}}{\scs{#2}}})}  
\nc{\binc}[2]{ \left (\!\! \begin{array}{c} \scs{#1}\\
    \scs{#2} \end{array}\!\! \right )}  
\nc{\bincc}[2]{  \left ( {\scs{#1} \atop
    \vspace{-.5cm}\scs{#2}} \right )}  
\nc{\sarray}[2]{\begin{array}{c}#1 \vspace{.1cm}\\ \hline
    \vspace{-.35cm} \\ #2 \end{array}}
\nc{\bs}{\bar{S}} \nc{\dcup}{\stackrel{\bullet}{\cup}}
\nc{\dbigcup}{\stackrel{\bullet}{\bigcup}} \nc{\etree}{\big |}
\nc{\la}{\longrightarrow} \nc{\fe}{\'{e}} \nc{\rar}{\rightarrow}
\nc{\dar}{\downarrow} \nc{\dap}[1]{\downarrow
\rlap{$\scriptstyle{#1}$}} \nc{\uap}[1]{\uparrow
\rlap{$\scriptstyle{#1}$}} \nc{\defeq}{\stackrel{\rm def}{=}}
\nc{\dis}[1]{\displaystyle{#1}} \nc{\dotcup}{\,
\displaystyle{\bigcup^\bullet}\ } \nc{\sdotcup}{\tiny{
\displaystyle{\bigcup^\bullet}\ }} \nc{\hcm}{\ \hat{,}\ }
\nc{\hcirc}{\hat{\circ}} \nc{\hts}{\hat{\shpr}}
\nc{\lts}{\stackrel{\leftarrow}{\shpr}}
\nc{\rts}{\stackrel{\rightarrow}{\shpr}} \nc{\lleft}{[}
\nc{\lright}{]} \nc{\uni}[1]{\tilde{#1}} \nc{\wor}[1]{\check{#1}}
\nc{\free}[1]{\bar{#1}} \nc{\den}[1]{\check{#1}} \nc{\lrpa}{\wr}
\nc{\curlyl}{\left \{ \begin{array}{c} {} \\ {} \end{array}
    \right .  \!\!\!\!\!\!\!}
\nc{\curlyr}{ \!\!\!\!\!\!\!
    \left . \begin{array}{c} {} \\ {} \end{array}
    \right \} }
\nc{\leaf}{\ell}       
\nc{\longmid}{\left | \begin{array}{c} {} \\ {} \end{array}
    \right . \!\!\!\!\!\!\!}
\nc{\ot}{\otimes} \nc{\sot}{{\scriptstyle{\ot}}}
\nc{\otm}{\overline{\ot}}
\nc{\ora}[1]{\stackrel{#1}{\rar}}
\nc{\ola}[1]{\stackrel{#1}{\la}}
\nc{\scs}[1]{\scriptstyle{#1}} \nc{\mrm}[1]{{\rm #1}}
\nc{\margin}[1]{\marginpar{\rm #1}}   
\nc{\dirlim}{\displaystyle{\lim_{\longrightarrow}}\,}
\nc{\invlim}{\displaystyle{\lim_{\longleftarrow}}\,}
\nc{\mvp}{\vspace{0.5cm}} \nc{\svp}{\vspace{2cm}}
\nc{\vp}{\vspace{8cm}} \nc{\proofbegin}{\noindent{\bf Proof: }}
\nc{\proofend}{$\blacksquare$ \vspace{0.5cm}}
\nc{\sha}{{\mbox{\cyr X}}}  
\nc{\ncsha}{{\mbox{\cyr X}^{\mathrm NC}}} \nc{\ncshao}{{\mbox{\cyr
X}^{\mathrm NC,\,0}}}
\nc{\shpr}{\diamond}    
\nc{\shprm}{\overline{\diamond}}    
\nc{\shpro}{\diamond^0}    
\nc{\shprr}{\diamond^r}     
\nc{\shpra}{\overline{\diamond}^r}
\nc{\shpru}{\check{\diamond}} \nc{\catpr}{\diamond_l}
\nc{\rcatpr}{\diamond_r} \nc{\lapr}{\diamond_a}
\nc{\sqcupm}{\ot}
\nc{\lepr}{\diamond_e} \nc{\vep}{\varepsilon} \nc{\labs}{\mid\!}
\nc{\rabs}{\!\mid} \nc{\hsha}{\widehat{\sha}}
\nc{\lsha}{\stackrel{\leftarrow}{\sha}}
\nc{\rsha}{\stackrel{\rightarrow}{\sha}} \nc{\lc}{\lfloor}
\nc{\rc}{\rfloor} \nc{\sqmon}[1]{\langle #1\rangle}
\nc{\forest}{\calf} \nc{\ass}[1]{\alpha({#1})}
\nc{\altx}{\Lambda_X} \nc{\vecT}{\vec{T}} \nc{\onetree}{\bullet}
\nc{\Ao}{\check{A}}
\nc{\seta}{\underline{\Ao}}
\nc{\deltaa}{\overline{\delta}}
\nc{\trho}{\tilde{\rho}}

\nc{\mmbox}[1]{\mbox{\ #1\ }}
\nc{\ann}{\mrm{ann}}
\nc{\Aut}{\mrm{Aut}} \nc{\can}{\mrm{can}} \nc{\colim}{\mrm{colim}}
\nc{\Cont}{\mrm{Cont}} \nc{\rchar}{\mrm{char}}
\nc{\cok}{\mrm{coker}} \nc{\dtf}{{R-{\rm tf}}} \nc{\dtor}{{R-{\rm
tor}}}
\renewcommand{\det}{\mrm{det}}
\nc{\depth}{{\mrm d}}
\nc{\Div}{{\mrm Div}} \nc{\End}{\mrm{End}} \nc{\Ext}{\mrm{Ext}}
\nc{\Fil}{\mrm{Fil}} \nc{\Frob}{\mrm{Frob}} \nc{\Gal}{\mrm{Gal}}
\nc{\GL}{\mrm{GL}} \nc{\Hom}{\mrm{Hom}} \nc{\hsr}{\mrm{H}}
\nc{\hpol}{\mrm{HP}} \nc{\id}{\mrm{id}} \nc{\im}{\mrm{im}}
\nc{\incl}{\mrm{incl}} \nc{\length}{\mrm{length}}
\nc{\LR}{\mrm{LR}} \nc{\mchar}{\rm char} \nc{\NC}{\mrm{NC}}
\nc{\mpart}{\mrm{part}} \nc{\pl}{\mrm{PL}}
\nc{\ql}{{\QQ_\ell}} \nc{\qp}{{\QQ_p}}
\nc{\rank}{\mrm{rank}} \nc{\rba}{\rm{RBA }} \nc{\rbas}{\rm{RBAs }}
\nc{\rbpl}{\mrm{RBPL}}
\nc{\rbw}{\rm{RBW }} \nc{\rbws}{\rm{RBWs }} \nc{\rcot}{\mrm{cot}}
\nc{\rest}{\rm{controlled}\xspace}
\nc{\rdef}{\mrm{def}} \nc{\rdiv}{{\rm div}} \nc{\rtf}{{\rm tf}}
\nc{\rtor}{{\rm tor}} \nc{\res}{\mrm{res}} \nc{\SL}{\mrm{SL}}
\nc{\Spec}{\mrm{Spec}} \nc{\tor}{\mrm{tor}}
\nc{\supp}{\mrm{supp}}
\nc{\Tr}{\mrm{Tr}}
\nc{\mtr}{\mrm{sk}}

\nc{\ab}{\mathbf{Ab}} \nc{\Alg}{\mathbf{Alg}}
\nc{\Algo}{\mathbf{Alg}^0} \nc{\Bax}{\mathbf{Bax}}
\nc{\Baxo}{\mathbf{Bax}^0} \nc{\RB}{\mathbf{RB}}
\nc{\RBo}{\mathbf{RB}^0} \nc{\BRB}{\mathbf{RB}}
\nc{\Dend}{\mathbf{DD}} \nc{\bfk}{{\bf k}} \nc{\bfone}{{\bf 1}}
\nc{\base}[1]{{a_{#1}}} \nc{\detail}{\marginpar{\bf More detail}
    \noindent{\bf Need more detail!}
    \svp}
\nc{\Diff}{\mathbf{Diff}} \nc{\gap}{\marginpar{\bf
Incomplete}\noindent{\bf Incomplete!!}
    \svp}
\nc{\FMod}{\mathbf{FMod}} \nc{\mset}{\mathbf{MSet}}
\nc{\rb}{\mathrm{RB}} \nc{\Int}{\mathbf{Int}}
\nc{\Mon}{\mathbf{Mon}}
\nc{\remarks}{\noindent{\bf Remarks: }} \nc{\Rep}{\mathbf{Rep}}
\nc{\Rings}{\mathbf{Rings}} \nc{\Sets}{\mathbf{Sets}}
\nc{\DT}{\mathbf{DT}}

\nc{\BA}{{\mathbb A}} \nc{\CC}{{\mathbb C}} \nc{\DD}{{\mathbb D}}
\nc{\EE}{{\mathbb E}} \nc{\FF}{{\mathbb F}} \nc{\GG}{{\mathbb G}}
\nc{\HH}{{\mathbb H}} \nc{\LL}{{\mathbb L}} \nc{\NN}{{\mathbb N}}
\nc{\QQ}{{\mathbb Q}} \nc{\RR}{{\mathbb R}} \nc{\TT}{{\mathbb T}}
\nc{\VV}{{\mathbb V}} \nc{\ZZ}{{\mathbb Z}}


\nc{\cala}{{\mathcal A}} \nc{\calc}{{\mathcal C}}
\nc{\cald}{{\mathcal D}} \nc{\cale}{{\mathcal E}}
\nc{\calf}{{\mathcal F}} \nc{\calfr}{{{\mathcal F}^{\,r}}}
\nc{\calfo}{{\mathcal F}^0} \nc{\calfro}{{\mathcal F}^{\,r,0}}
\nc{\oF}{\overline{F}}  \nc{\calg}{{\mathcal G}}
\nc{\calh}{{\mathcal H}} \nc{\cali}{{\mathcal I}}
\nc{\calj}{{\mathcal J}} \nc{\call}{{\mathcal L}}
\nc{\calm}{{\mathcal M}} \nc{\caln}{{\mathcal N}}
\nc{\calo}{{\mathcal O}} \nc{\calp}{{\mathcal P}}
\nc{\calr}{{\mathcal R}} \nc{\calt}{{\mathcal T}}
\nc{\caltr}{{\mathcal T}^{\,r}}
\nc{\calu}{{\mathcal U}} \nc{\calv}{{\mathcal V}}
\nc{\calw}{{\mathcal W}} \nc{\calx}{{\mathcal X}}
\nc{\CA}{\mathcal{A}}

\nc{\fraka}{{\mathfrak a}} \nc{\frakB}{{\mathfrak B}}
\nc{\frakb}{{\mathfrak b}} \nc{\frakd}{{\mathfrak d}}
\nc{\oD}{\overline{D}}
\nc{\frakF}{{\mathfrak F}} \nc{\frakg}{{\mathfrak g}}
\nc{\frakm}{{\mathfrak m}} \nc{\frakM}{{\mathfrak M}}
\nc{\frakMo}{{\mathfrak M}^0} \nc{\frakp}{{\mathfrak p}}
\nc{\frakS}{{\mathfrak S}} \nc{\frakSo}{{\mathfrak S}^0}
\nc{\fraks}{{\mathfrak s}} \nc{\os}{\overline{\fraks}}
\nc{\frakT}{{\mathfrak T}}
\nc{\oT}{\overline{T}}
\nc{\frakX}{{\mathfrak X}} \nc{\frakXo}{{\mathfrak X}^0}
\nc{\frakx}{{\mathbf x}}
\nc{\frakTx}{\frakT}      
\nc{\frakTa}{\frakT^a}        
\nc{\frakTxo}{\frakTx^0}   
\nc{\caltao}{\calt^{a,0}}   
\nc{\ox}{\overline{\frakx}} \nc{\fraky}{{\mathfrak y}}
\nc{\frakz}{{\mathfrak z}} \nc{\oX}{\overline{X}}

\font\cyr=wncyr10

\nc{\redtext}[1]{\textcolor{red}{#1}}


\title{Rota-Baxter operators on generalized power series rings}
\author{Li Guo}
\address{Department of Mathematics and Computer Science,
         Rutgers University,
         Newark, NJ 07102}
\email{liguo@rutgers.edu}
\author{Zhongkui Liu}
\address{Department of Mathematics,
    Northwest Normal University,
    Lanzhou, Gansu, China}
\email{liuzk@nwnu.edu.cn}



\begin{abstract}
An important instance of Rota-Baxter algebras from their quantum field theory application is the ring of Laurent series with a suitable projection. We view the ring of Laurent series as a special case of generalized power series rings with exponents in an ordered monoid. We study when a generalized power series ring has a Rota-Baxter operator and how this is related to the ordered monoid.
\\
\\
{\em Key Words:} Rota-Baxter algebra, generalized power series ring, strictly ordered monoid.

\end{abstract}


\maketitle


\setcounter{section}{0}
{\ }
\vspace{-1cm}

\section{Introduction}
Let $R$ be a unitary commutative ring. A Rota-Baxter $R$-algebra (of weight -1) is an associative $R$-algebra $A$ together with a linear operator $P:A\to A$ such that
\begin{equation}
P(x)P(y)=P(xP(y))+P(P(x)y)- P(xy), \forall\, x,y\in A.
\mlabel{eq:rba}
\end{equation}
Since its introduction by G. Baxter~\mcite{Ba} in 1960 from his probability study, it has been studied by many authors such as P. Cartier, A. Connes, D. Kreimer, J.-L. Loday and G.-C. Rota, in connection with combinatorics, mathematics physics, operads and number theory~\mcite{Ag3,A-M,A-G-K-O,Ca,C-K0,C-K1,E-G4,E-G6,Gu2, Gu5,G-K1,G-K2,G-S,G-Z,Le1,L-R1,Ro1,Ro2}.
A simple but important example of a Rota-Baxter algebra is the algebra of Laurent series
$$R [[\vep, \vep^{-1}]=\left \{ \sum_{n=k}^\infty a_n \vep^{n} \
\big |\ a_n\in R, -\infty <k<\infty \right \}$$
where the Rota-Baxter operator is the projection
$$ P(\sum_{n=k}^\infty a_n \vep^n)= \sum_{n< 0} a_n \vep^n.$$
See \mcite{C-K0,C-K1,C-M,E-G-K3,E-G-M} for its application in the renormalization of quantum field theory.

A natural algebraic setting to view the ring of Laurent series is to regard it as a special case of generalized power series rings with exponents in a strictly ordered monoid. They have been studied extensively since the 1990s~\mcite{E-R,Liu,L-A,Ri1,Ri2,Ri3,Ri4}.
In this context, the above operator $P$ is obtained as a cut-off operator (whose general definition will be given in Section~\mref{sec:cutoff}) at the point $0$ of the ordered monoid $(\ZZ,\leq)$: Let
$$ P(\sum_{n=k}^\infty a_n \vep^n)=\sum_{n=k}^\infty
    \check{a}_n \vep^n,$$
then
\begin{equation}
\check{a}_n= \begin{cases} a_n, & n<0,\\ 0, & n\not< 0
\end{cases}
\mlabel{eq:lacut}
\end{equation}
Motivated by this connection, we naturally ask when a cut-off operator in a generalized power series ring is a Rota-Baxter operator. As it turns out, the answer to this question is related to a more general study of the relationship between Rota-Baxter algebras and algebras of generalized power series. We will present our findings in Section~\mref{sec:gps}. In particular, we show that a decomposition of the monoid gives a Rota-Baxter operator on the algebra of generalized power series. This gives a quite large class of Rota-Baxter algebras. This also generalizes the classical construction of Rota-Baxter operator on a semigroup algebra from a decomposition of the semigroup. Then in Section~\mref{sec:cutoff}, we use these general results to deduce the answer to our question above.

\section{Decomposition in generalized power series rings}
\mlabel{sec:gps}

We recall the concept of a strictly ordered monoid. See \mcite{E-R,Liu,L-A,Ri1,Ri2,Ri3,Ri4} for further details.
By an ordered set, we will mean a partially ordered set. An ordered set $(S,\leq)$ is called {\bf artinian}
if every strictly decreasing sequence of elements of $S$ is finite, and is called {\bf narrow} if every subset of pairwise
order-incomparable elements of $S$ is finite. Let $S$ be a commutative monoid. Unless stated otherwise, the operation of $S$
shall be denoted additively, and the neutral element by $0$.

An ordered monoid $(S,\leq)$ is called a {\bf strictly ordered monoid} if for $s,s',t\in S$
and $s<s'$, we have $s+t<s'+t$. Let $R$ be a commutative  ring. Let $
[[R^{S,\leq}]]$ be the set of all maps $f:S\longrightarrow R$ such
that the ordered set $\supp(f)=\{s\in S\vert f(s)\neq 0\}$ is artinian and narrow.
With pointwise addition, $[[R^{S,\leq}]]$ is an abelian additive group.
For every $s\in S$ and $f,g\in [[R^{S,\leq}]]$, let $$X_s(f,g)=\{(u,v)\in S\times S\vert
s=u+v, f(u)\neq 0, g(v)\neq 0\}.$$
It follows from \cite[1.16]{Ri3} that
$X_s(f,g)$ is finite. This fact allows us to define the operation of
convolution:
$$(fg)(s)=\sum_{(u,v)\in X_s(f,g)}f(u)g(v).$$
With this operation, and pointwise addition, $[[R^{S,\leq}]]$
becomes a commutative ring,
 which is called the {\bf ring of generalized power series}. The elements of $[[R^{S,\leq}]]$
are called generalized power series with coefficients in $R$ and
exponents in $S$.

For example, if $S=\Bbb N\cup\{0\}$ and $\leq$ is  the usual order,
then $[[R^{\Bbb N\cup\{0\}, \leq}]]$ is isomorphic to $R[[x]]$, the usual ring of
power series. If $S$ is a commutative monoid and $\leq$ is the
trivial order, then $[[R^{S,\leq}]]=R[S]$, the monoid-ring of $S$
over $R$. If $S=\Bbb Z$ and $\leq$ is  the usual order, then
$[[R^{\Bbb Z, \leq}]]$ is isomorphic to $R[[x, x^{-1}]$, the usual Laurent series
ring. Further examples are given in \mcite{Ri1,Ri3}. Results for rings of
generalized power series can be found in \mcite{E-R,L-A,Ri1,Ri2,Ri3,Ri4}.

 Let $S$
be a monoid with subsets $S_1$ and $S_2$ and the disjoint union
$$S=S_1 \dcup S_2.$$ Define a map $P:[[R^{S,\leq}]]\longrightarrow
[[R^{S,\leq}]]$ via
\begin{equation} P(f)(s)=\begin{cases} f(s) & s\in S_1\\ 0
& s\in S_2
\end{cases} \qquad \forall f\in [[R^{S,\leq}]].
\mlabel{eq:subsom}
\end{equation}
Note that $\supp(P(f))\subseteq \supp(f)$. So $\supp(P(f))$ is
artinian and narrow. Thus $P(f)$ is indeed in $[[R^{S,\leq}]]$.

\begin{theorem} $([[R^{S,\leq}]], P)$ is a Rota-Baxter algebra if and only if
$S_1$ and $S_2$ are subsemigroups of $S$.
\mlabel{thm:ssg}
\end{theorem}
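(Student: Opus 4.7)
The plan is to verify the Rota-Baxter identity $P(f)P(g) = P(fP(g)) + P(P(f)g) - P(fg)$ by evaluating both sides pointwise at an arbitrary $s \in S$, and then read off the resulting combinatorial conditions on $S_1, S_2$. Using the convolution formula, each side at $s$ is a finite sum indexed by decompositions $s = u+v$, and the summand $f(u)g(v)$ carries a coefficient depending on which of $S_1 \dcup S_2$ the elements $s, u, v$ lie in. Explicitly, I would split decompositions of $s$ into the four cell types $(u,v) \in S_i \times S_j$ for $i, j \in \{1,2\}$ and tabulate, in each cell, the coefficient contributed by each of the four terms in the Rota-Baxter identity.

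A short calculation then gives two cases. If $s \in S_2$, the right-hand side vanishes, while the left-hand side collects precisely the decompositions with $(u,v) \in S_1 \times S_1$; thus the identity at $s$ reduces to $\sum_{u+v=s,\,u,v\in S_1} f(u)g(v) = 0$. If $s \in S_1$, the contributions from the mixed cells $S_1\times S_2$ and $S_2 \times S_1$ cancel out exactly, the $S_1 \times S_1$ cell matches on both sides, and the identity reduces to $\sum_{u+v=s,\,u,v\in S_2} f(u)g(v) = 0$. Thus the full Rota-Baxter identity is equivalent to the statement that for every $s \in S_2$ there is no decomposition $s = u+v$ with $u,v \in S_1$, and for every $s \in S_1$ there is no decomposition $s = u+v$ with $u,v \in S_2$; in other words, $S_1 + S_1 \subseteq S_1$ and $S_2 + S_2 \subseteq S_2$.

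For the ``if'' direction this is immediate from the reduction above. For the ``only if'' direction I would promote the pointwise identities to the claim about subsemigroups by testing against the indicator series $\chi_u \in [[R^{S,\leq}]]$ (well-defined since a single-point support is trivially artinian and narrow): plugging $f = \chi_u$, $g = \chi_v$ into the Rota-Baxter identity and evaluating at $s = u+v$ forces the contradiction $1 = 0$ or $0 = -1$ whenever $u, v$ lie in the same $S_i$ but $u+v$ lies in the other. The only step that requires any real care is the bookkeeping of coefficients in the four cells for $s\in S_1$, where one must check that the ``mixed'' decompositions contribute $0+1-1 = 0$ on each side so that the identity truly isolates the $S_2\times S_2$ cell; apart from this minor combinatorial check, the argument is routine.
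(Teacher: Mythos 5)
Your proposal is correct and follows essentially the same route as the paper: pointwise evaluation of the Rota--Baxter identity with a case split on $s\in S_1$ versus $s\in S_2$, an inclusion--exclusion over the four cells $S_i\times S_j$ isolating the $S_1\times S_1$ (resp.\ $S_2\times S_2$) contribution, and indicator series $e_u, e_v$ to extract the subsemigroup conditions in the converse. The only cosmetic difference is that you test both failure cases directly with indicators, whereas the paper handles the case where $S_2$ fails to be a subsemigroup by passing to the complementary operator $\id-P$; both work equally well.
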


\begin{proof}
($\Rightarrow$)
Suppose that $S_1$ and $S_2$ are subsemigroups. For any $f, g\in [[R^{S,\leq}]]$ we will verify
\begin{equation}
P(f)P(g)=P(fP(g))+P(P(f)g)-P(fg).
\mlabel{eq:rba2}
\end{equation}
To check this for $s\in S$, we consider the two cases of $s\in S_1$ and $s\in S_2$.

\noindent
{\bf Case 1. }
Suppose that $s\in S_1$. Then
\begin{eqnarray*} P(fP(g))(s)&=&(fP(g))(s) =\sum_{(u,v)\in X_s(f, P(g))}f(u)P(g)(v) \\&=&\sum_{\substack{(u,v)\in X_s(f, P(g))\\
v\in S_1}}f(u)P(g)(v)=\sum_{\substack{(u,v)\in X_s(f, g)\\
v\in S_1}}f(u)g(v),
\end{eqnarray*}
\begin{eqnarray*} P(P(f)g)(s)&=&(P(f)g)(s)=\sum_{(u,v)\in
X_s(P(f), g)}P(f)(u)g(v)\\&=&\sum_{\substack{(u,v)\in X_s(P(f), g)\\
u\in S_1}}P(f)(u)g(v)=\sum_{\substack{(u,v)\in X_s(f, g)\\
u\in S_1}}f(u)g(v),
\end{eqnarray*}
\begin{eqnarray*} (P(f)P(g))(s)&=& \sum_{(u,v)\in
X_s(P(f), P(g))}P(f)(u)P(g)(v)\\&=&\sum_{\substack{(u,v)\in X_s(P(f), P(g))\\
u\in S_1, v\in S_1}}P(f)(u)P(g)(v)=\sum_{\substack{(u,v)\in X_s(f, g)\\
u\in S_1, v\in S_1}}f(u)g(v).
\end{eqnarray*}
Since $S_2$ is a subsemigroup, we have $\{(u, v)\in X_s(f, g)| u,
v\in S_2\}=\emptyset$. Thus
\begin{eqnarray*} P(fg)(s)&=&(fg)(s)=\sum_{(u,v)\in
X_s(f,
g)}f(u)g(v)\\&=&P(fP(g))(s)+P(P(f)g)(s)-(P(f)P(g))(s).\end{eqnarray*}
\medskip

\noindent
{\bf Case 2. Suppose that $s\in S_2$.} Then $$(P(f)P(g))(s)= \sum_{(u,v)\in
X_s(P(f), P(g))}P(f)(u)P(g)(v)=\sum_{\substack{(u,v)\in X_s(P(f),
P(g))\\ u,v \in S_1}}P(f)(u)P(g)(v)=0$$ since $S_1$ is a subsemigroup. By the definition of $P$, we also have
$$P(fP(g))(s)=P(P(f)g)(s)=P(fg)(s)=0.$$
Thus
$$P(fg)(s)=P(fP(g))(s)+P(P(f)g)(s)-(P(f)P(g))(s).$$

Therefore Eq.~(\mref{eq:rba2}) is verified and
$([[R^{S,\leq}]], P)$ is a Rota-Baxter algebra.

\medskip

($\Leftarrow$)
Conversely, suppose that $S_1$ or $S_2$ is not a subsemigroup of $S$. We show that $P$ is not a Rota-Baxter operator.
\smallskip

\noindent
{\bf Case 1. Suppose $S_1$ is not a subsemigroup of $S$.}
Then there exist $u, v\in S_1$ such that $u+v\not\in S_1$. Thus $u+v\in S_2$. For $w\in S$, define
$e_w\in [[R^{S,\leq}]]$ by
$$ e_w(s) = \begin{cases} 1, s=w, \\ 0, s\neq w. \end{cases}$$
Then we have
\begin{eqnarray*} P(e_u)(s)&=&\begin{cases} e_u(s), & s\in S_1\\
0, & s\in S_2 \end{cases}\\
&=&\begin{cases} 1, & s=u\\
0, & s\neq u \end{cases}\\
&=& e_u(s).\end{eqnarray*} Thus $P(e_u)=e_u$. Similarly
$P(e_v)=e_v$. Hence
$$(P(e_uP(e_v))+P(P(e_u)e_v)-P(e_ue_v))(u+v)=P(e_ue_v)(u+v)=0$$
since $u+v\in S_2$. On the other hand,
$$(P(e_u)P(e_v))(u+v)=(e_ue_v)(u+v)=\sum_{(u',v')\in
X_{u+v}(e_u, e_v)}e_u(u')e_v(v')=1.$$ Thus $P(e_u)P(e_v)\neq
P(e_uP(e_v))+P(P(e_u)e_v)-P(e_ue_v).$ So $([[R^{S,\leq}]], P)$ is not a
Rota-Baxter algebra.
\medskip

\noindent
{\bf Case 2. Suppose $S_2$ is not a subsemigroup of $S$.}
If $([[R^{S,\leq}]], P)$ were a Rota-Baxter algebra, then as is well-known,
$([[R^{S,\leq}]], \tilde{P})$ is also a Rota-Baxter algebra. Here
$$\tilde{P}:  [[R^{S,\leq}]]\longrightarrow [[R^{S,\leq}]]$$
is defined by
$$\tilde{P}(f)(s)=((\id-P)(f))(s)
=\begin{cases} 0, & s\in S_1\\ f(s),
& s\in S_2
\end{cases}\qquad \forall f\in [[R^{S,\leq}]].$$
Applying to $\tilde{P}$ the only if part of the theorem that we have proved above, we conclude that $S_1$ and $S_2$ are subsemigroups of $S$. This is a contradiction.
Therefore, $([[R^{S,\leq}]], P)$ is not a Rota-Baxter algebra.
\end{proof}

Recall that a semigroup $S$ is a strictly ordered monoid with the discrete order and the corresponding generalized power series $[[R^{S,\leq}]]$ is simply the semigroup ring $R\,S$.
\begin{coro}
Let $S$ be a semigroup with a
disjoint union $$ S=S_1 \dcup S_2.$$
Define
$P:R\,S \to R\,S$ by
\begin{equation} P(x)=\left \{ \begin{array}{ll} x, & x\in S_1, \\ 0, & x\in S_2 \end{array} \right .
\mlabel{eq:subsg}
\end{equation}
Then $(R\,S, P)$ is a Rota-Baxter algebra if and only if $S_1$ and $S_2$ are subsemigroups of $S$.
\end{coro}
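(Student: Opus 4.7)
The plan is to deduce this corollary as a direct specialization of Theorem~\ref{thm:ssg} to the case of the discrete order on $S$. First, I would check that the discrete order $\leq$ (in which $s\leq t$ if and only if $s=t$) is strictly compatible with the semigroup operation: the implication ``$s<s'\Rightarrow s+t<s'+t$'' holds vacuously since no two distinct elements are comparable. Likewise, every subset of $(S,\leq)$ consists entirely of pairwise incomparable elements, so the narrowness condition forces $\supp(f)$ to be finite for each $f\in[[R^{S,\leq}]]$, and the artinian condition is then automatic. Hence $[[R^{S,\leq}]]$ collapses to the set of finitely supported functions $S\to R$, which is exactly the semigroup ring $R\,S$.

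Second, I would verify that under this identification, the operator $P$ from Eq.~(\ref{eq:subsg}) matches the operator from Eq.~(\ref{eq:subsom}): writing an element of $R\,S$ as $f=\sum_{x\in\supp(f)}f(x)\,x$, the formula in~(\ref{eq:subsom}) keeps the coefficients supported on $S_1$ and kills those supported on $S_2$, which is precisely the $R$-linear extension of~(\ref{eq:subsg}) from generators $x\in S$ to all of $R\,S$.

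With these identifications in place, the ``if and only if'' statement of the corollary follows immediately from Theorem~\ref{thm:ssg} applied to the strictly ordered structure $(S,\leq)$ above. The only small caveat is the bookkeeping discrepancy that Theorem~\ref{thm:ssg} is stated for monoids while the corollary allows $S$ to be a semigroup; I expect this to be the main (very mild) obstacle. It is resolved by observing that the proof of Theorem~\ref{thm:ssg} nowhere invokes a neutral element: the Rota-Baxter identity is checked pointwise at an arbitrary $s\in S$, and the counterexamples in the converse direction use only $e_u$, $e_v$ with $u,v\in S$. Thus the argument transfers verbatim to the semigroup setting, completing the proof.
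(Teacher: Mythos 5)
Your proposal is correct and follows exactly the route the paper intends: the paper's own (implicit) proof is the one-line remark preceding the corollary, namely that a semigroup with the discrete order is a strictly ordered monoid whose generalized power series ring is the semigroup ring $R\,S$, so the corollary is a direct specialization of Theorem~\ref{thm:ssg}. Your additional care in checking that the discrete order is strict, that narrowness forces finite support, and that the monoid-versus-semigroup mismatch is harmless only makes explicit what the paper leaves tacit.
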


Combining this corollary with Theorem~\mref{thm:ssg}, we have
\begin{coro}
Let $S$ be a strictly ordered monoid with a
disjoint union $$ S=S_1 \dcup S_2.$$
Define $P:R\,S \to R\,S$ as in Eq.~(\mref{eq:subsg}) and $\widehat{P}:[[R^{S,\leq}]]\to [[R^{S,\leq}]]$ as in Eq.~(\mref{eq:subsom}).
Then $P$ is a Rota-Baxter operator if and only if
$\widehat{P}$ is a Rota-Baxter operator.
\end{coro}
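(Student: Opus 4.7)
The proof is essentially a matter of chaining the two preceding results, since both sides of the biconditional turn out to be equivalent to the same semigroup condition on the decomposition $S = S_1 \dcup S_2$. The plan is therefore to funnel both Rota-Baxter conditions through the common criterion provided by Theorem~\mref{thm:ssg}.

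First, I would apply Theorem~\mref{thm:ssg} directly to the strictly ordered monoid $(S,\leq)$ and the decomposition $S = S_1 \dcup S_2$. This immediately yields that $\widehat{P}$ is a Rota-Baxter operator on $[[R^{S,\leq}]]$ if and only if $S_1$ and $S_2$ are subsemigroups of $S$. Next, I would appeal to the preceding corollary (equivalently, Theorem~\mref{thm:ssg} applied to $S$ equipped with the discrete order, under which $[[R^{S,\leq}]]$ reduces to the semigroup ring $R\,S$) to conclude that $P$ is a Rota-Baxter operator on $R\,S$ if and only if $S_1$ and $S_2$ are subsemigroups of $S$. Combining the two biconditionals gives the desired equivalence.

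There is no real obstacle here; the only point worth flagging is the implicit observation that the subsemigroup condition depends only on the monoid structure of $S$, not on the order $\leq$. This is why the same criterion governs both the generalized power series ring (where the strict order $\leq$ is used) and the monoid ring (where the discrete order is used), and so both Rota-Baxter conditions collapse to the same intrinsic requirement on $(S_1,S_2)$.
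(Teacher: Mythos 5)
Your proof is correct and matches the paper's own argument exactly: the paper derives this corollary by "combining" the preceding corollary (the semigroup-ring case) with Theorem~\ref{thm:ssg}, both of which reduce the respective Rota-Baxter conditions to $S_1$ and $S_2$ being subsemigroups. Your added remark that the subsemigroup criterion is independent of the order is a fair articulation of why the chaining works.
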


\begin{remark} {\rm
Suppose that  $S=S_1 \dcup S_2=T_1 \dcup T_2$ where
$S_i$ and $T_i$ are subsemigroups of $S$, $i=1, 2$. Then
$([[R^{S,\leq}]], P)$ and $([[R^{S,\leq}]], Q)$ are Rota-Baxter
algebras where $P$ and $Q$ are defined as in Eq.~(\mref{eq:subsg}). It is easy to see
that $P\circ Q=Q\circ P$ as linear operators on $[[R^{S,\leq}]]$. An ennea algebra~\mcite{Le1} is a $R$-module with 9 binary multiplications that satisfy 49 relations. In~\mcite{E-G2}, it is shown that the operad of ennea algebras is the black square product of
the operad of dendriform trialgebras with itself.
By Corollary 2.6 in~\mcite{Le1}, an algebra with two commuting Rota-Baxter operators of weight $-1$ is naturally an ennea algebra.
Hence we get an example of an ennea algebra.
}
\end{remark}

\section{Cut-off operators}
\mlabel{sec:cutoff}

We now study when a cut-off operator on a general power series ring is a Rota-Baxter operator. Let $w\in S$ be given. The {\bf cut-off operator at $w$} on $[[R^{S,\leq}]]$ is the linear operator
$P_w:[[R^{S,\leq}]]\longrightarrow [[R^{S,\leq}]]$ defined by
\begin{eqnarray*} P_w(f): S & \longrightarrow & R,\\
s &\mapsto& \begin{cases} f(s),& s<w\\ 0, & s\not< w \end{cases}
\end{eqnarray*}
for every $f\in [[R^{S,\leq}]]$. Clearly $\supp(P_w(f))\subseteq
\supp(f)$. So $\supp(P_w(f))$ is artinian and narrow. Thus
$P_w(f)\in[[R^{S,\leq}]]$.
Denote
$$A_w=\{(u, v)|u, v\in S, u\not< w, v\not< w, u+v< w\},$$
$$B_w=\{(u, v)|u, v\in S, u<w, v<w, u+v \not<   w\}.$$

\begin{prop} Let  $(S, \leq)$ be a strictly
 ordered monoid. Then $([[R^{S,\leq}]], P_w)$ is a Rota-Baxter
algebra if and only if $ A_w= \emptyset$ and $B_w= \emptyset$.
\mlabel{pp:set}
\end{prop}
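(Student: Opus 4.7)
The plan is to reduce Proposition \mref{pp:set} directly to Theorem \mref{thm:ssg} by choosing the appropriate decomposition of $S$ into two pieces determined by the element $w$.

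Set $S_1 = \{s \in S \mid s < w\}$ and $S_2 = \{s \in S \mid s \not< w\}$. By construction we have the disjoint union $S = S_1 \dcup S_2$, and the cut-off operator $P_w$ coincides exactly with the operator $P$ defined in Eq.~(\mref{eq:subsom}) for this decomposition. Thus Theorem \mref{thm:ssg} is directly applicable: $([[R^{S,\leq}]], P_w)$ is a Rota-Baxter algebra if and only if both $S_1$ and $S_2$ are subsemigroups of $S$.

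The remaining step is to translate these two subsemigroup conditions into the emptiness of $A_w$ and $B_w$. I would observe that $S_1$ fails to be a subsemigroup precisely when there exist $u, v \in S$ with $u < w$ and $v < w$ but $u+v \not< w$, i.e.\ precisely when $B_w \neq \emptyset$; similarly $S_2$ fails to be a subsemigroup precisely when there exist $u, v \in S$ with $u \not< w$ and $v \not< w$ but $u+v < w$ (noting that $u+v \in S_2$ means $u+v \not< w$, so its failure means $u+v < w$), i.e.\ precisely when $A_w \neq \emptyset$. Combining the two translations with the equivalence from Theorem \mref{thm:ssg} yields the claim.

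There is no real obstacle here: the argument is essentially a bookkeeping exercise that identifies the cut-off operator as a special case of the operator built from a two-piece decomposition, and then rewrites the two subsemigroup conditions in the notation $A_w, B_w$. The only point requiring mild care is the contrapositive formulation of the subsemigroup condition for $S_2$, since being in $S_2$ is itself defined by the negation $\not<$; once that is handled the equivalence with $A_w = \emptyset$ is immediate.
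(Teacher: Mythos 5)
Your proposal is correct and follows essentially the same route as the paper: identify $P_w$ as the operator $P$ of Eq.~(\mref{eq:subsom}) for the decomposition $S_1=\{s\in S\mid s<w\}$, $S_2=\{s\in S\mid s\not<w\}$, invoke Theorem~\mref{thm:ssg}, and translate the two subsemigroup conditions into $B_w=\emptyset$ and $A_w=\emptyset$ respectively. No gaps.
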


\begin{proof}
Note that $P_w$ is the operator $P$ defined in Eq.~(\mref{eq:subsom}) with $S_1=\{s\in S| s<w\}$ and
$S_2=\{s\in S| s\not< w\}$.
Then the proposition follows from Theorem~\mref{thm:ssg} because $A_w=\emptyset$ if and only if $\{s\in S| s\not< w\}$
is a subsemigroup of $S$, and $B_w=\emptyset$ if and only if $\{s\in
S| s< w\}$ is a subsemigroup of $S$.
\end{proof}

\begin{coro} Let  $(S, \leq)$ be a strictly
 totally ordered monoid. Then $([[R^{S,\leq}]], P_w)$ is a
Rota-Baxter algebra if and only if $w\geq 0$ and $B_w= \emptyset$.
\mlabel{co:strict}
\end{coro}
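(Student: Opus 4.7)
The plan is to invoke Proposition~\mref{pp:set} and reduce the corollary to a single equivalence: under the total-order hypothesis, $A_w=\emptyset$ if and only if $w\geq 0$. The condition $B_w=\emptyset$ carries over verbatim from Proposition~\mref{pp:set}, so once this equivalence is established the corollary follows immediately.

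For $w\geq 0 \Rightarrow A_w=\emptyset$, I would take an arbitrary pair $(u,v)$ with $u\not< w$ and $v\not< w$. Totality converts these hypotheses into $u\geq w$ and $v\geq w\geq 0$. A short case analysis on $u=w$ versus $u>w$, together with $v=0$ versus $v>0$, and in each case one application of the strict-order axiom, yields $u+v\geq w+v\geq w$, so $u+v\not< w$ and $(u,v)\notin A_w$.

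For the converse $A_w=\emptyset \Rightarrow w\geq 0$, I would argue by contrapositive. Totality lets me replace $w\not\geq 0$ by $w<0$, and I consider the pair $(u,v)=(w,w)$. Reflexivity gives $w\not< w$, while applying the strict-order axiom to $w<0$ with $t=w$ produces $w+w<0+w=w$. Hence $(w,w)\in A_w$, contradicting $A_w=\emptyset$.

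The whole argument is routine manipulation of the strict-order axiom; the only place that requires mild care is the case split establishing $u+v\geq w$, since the axiom is stated only for \emph{strict} inequalities and the equality boundaries $u=w$ and $v=0$ must be tracked separately. Totality enters in exactly two places: to turn $u\not< w$ into $u\geq w$ in the first direction, and to turn $w\not\geq 0$ into $w<0$ in the second.
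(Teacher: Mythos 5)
Your proposal is correct and follows essentially the same route as the paper: both reduce to the equivalence $A_w=\emptyset \Leftrightarrow w\geq 0$, use the witness $(w,w)$ when $w<0$, and use monotonicity of addition under the total order for the other implication (you phrase it directly from $w\geq 0$, the paper contrapositively from $(u,v)\in A_w$, but the inequality manipulation is the same).
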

\begin{proof}
We just need to show that $A_w=\emptyset$ if and only if $w\geq 0$.

Note that since $\leq$ is a total order on $S$,
$s\not<t$ if and only if $t\leq s$. If $w<0$, then clearly $(w,
w)\in A_w$ and, so $A_w\neq \emptyset$. Conversely suppose that $(u,
v)\in A_w$. Then $w\leq u, w\leq v, u+v< w$. Thus $w+w\leq u+v<w$
and, so $w<0$.
\end{proof}
\begin{coro} Consider the strictly
totally ordered monoid $(\ZZ,\leq)$. Then $([[R^{\ZZ,\leq}]], P_w)$ is a Rota-Baxter algebra if and only if $w=0,1$.
\end{coro}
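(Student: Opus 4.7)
The plan is to invoke Corollary~\ref{co:strict} to reduce the problem to determining, among integers $w\geq 0$, exactly those for which $B_w=\emptyset$. Since $(\ZZ,\leq)$ is strictly totally ordered, Corollary~\ref{co:strict} tells us that $([[R^{\ZZ,\leq}]],P_w)$ is Rota--Baxter if and only if $w\geq 0$ and $B_w=\emptyset$, so negative $w$ are immediately ruled out and we only have to sort through $w=0,1,2,\ldots$.

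First I would verify the two positive cases. For $w=0$, the set $B_0$ consists of pairs $(u,v)$ of negative integers with $u+v\geq 0$; but the sum of two negative integers is negative, so $B_0=\emptyset$. For $w=1$, the set $B_1$ consists of pairs $(u,v)$ with $u\leq 0$, $v\leq 0$, and $u+v\geq 1$; again the sum of two nonpositive integers is nonpositive, so $B_1=\emptyset$. Hence $w=0$ and $w=1$ give Rota--Baxter operators.

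Next I would exhibit an explicit obstruction for every $w\geq 2$. The obvious candidate is $(u,v)=(w-1,w-1)$: we have $u<w$ and $v<w$, and the sum $u+v=2(w-1)=2w-2$ satisfies $2w-2\geq w$ precisely when $w\geq 2$, so $u+v\not<w$. Thus $(w-1,w-1)\in B_w$ and $B_w\neq\emptyset$ for every $w\geq 2$, completing the characterization.

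There is no real obstacle here; the entire content is packaged in Corollary~\ref{co:strict}, and the remaining work is a short case analysis in $\ZZ$. The only point to be careful about is to note that $s\not<w$ in a total order means $w\leq s$, so that the inequalities defining $B_w$ translate into the clean arithmetic conditions used above.
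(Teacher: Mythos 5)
Your proposal is correct and follows essentially the same route as the paper: both reduce to Corollary~\ref{co:strict} and then determine for which $w\geq 0$ the set $B_w$ is empty, the paper phrasing this as ``$\{s\in\ZZ\mid s<w\}$ is a subsemigroup iff $w=0,1$'' while you carry out the equivalent arithmetic directly (including the explicit witness $(w-1,w-1)$ for $w\geq 2$). No gaps.
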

\begin{proof}
This follows from Corollary~\mref{co:strict} and the proof of Proposition~\mref{pp:set} since
\begin{align*}
B_w=\emptyset & \Leftrightarrow
\{s\in
S| s< w\}\ \mbox{ is a subsemigroup of } S \\
& \Leftrightarrow \quad w=0,1
\end{align*}
\end{proof}
Thus we have recovered the Rota-Baxter algebra $(R[[\vep,\vep^{-1}],P)$ of Laurent series that has motivated our study.
\medskip

\noindent
{\bf Acknowledgements} This work was supported by
NSF grant DMS 0505445 of U.S (Li Guo) and
by TRAPOYT and the Cultivation Fund of the Key Scientific and
Technical Innovation Project, Ministry of Education of China (Zhongkui Liu).


%
%

\end{document}